\theoremstyle{plain}
\newtheorem{thm}{Theorem}
\theoremstyle{plain}
\newtheorem{lem}[thm]{Lemma}
\theoremstyle{definition}
\newtheorem{mydef}{Definition}
\theoremstyle{remark}
\newcommand{\abs}[1]{\ensuremath{\left|#1\right|}}
\newcommand{\ip}[2]{\ensuremath{\left\langle #1, #2 \right\rangle}}
\newcommand{\norm}[1]{\ensuremath{\left\| #1 \right\|}}
\begin{document}
\parindent 0pt     %Sometimes you just want block formatting
\parskip 6pt
\numberwithin{equation}{section}

\title{A class of Markov chains with no spectral gap}
\author{Yevgeniy Kovchegov}
\address{Department of Mathematics, Oregon State University, Corvallis,
  OR 97331}
\email{kovchegy@math.oregonstate.edu}

\author{Nicholas Michalowski}
\address{Department of Mathematics, Oregon State
  University, Corvallis, OR 97331}
\email{Nicholas.Michalowski@math.oregonstate.edu}
%\thanks{}

\begin{abstract}
In this paper we extend the results of the research started in \cite{YK2009} and \cite{YK2010}, in which Karlin-McGregor diagonalization of certain reversible Markov chains over countably infinite general state spaces by orthogonal polynomials was used to estimate the rate of convergence to a stationary distribution.   

We use a method of Koornwinder \cite{TK1984} to  generate a large and interesting family of random walks which exhibits a lack of spectral gap, and a polynomial rate of convergence to the stationary distribution.  For the Chebyshev type subfamily of Markov chains, we use asymptotic techniques to obtain an upper bound of order $O\left({\log{t} \over \sqrt{t}}\right)$ and a lower bound of order $O\left({1 \over \sqrt{t}}\right)$ on the distance to the stationary distribution regardless of the initial state. Due to the lack of a spectral gap, these results lie outside the scope of geometric ergodicity theory \cite{SMRT2009}.

\end{abstract}

\maketitle

\section{Introduction}
Let $P=\Big(p(i,j)\Big)_{i,j \in \Omega}$ be a reversible Markov chain over a sample space $\Omega$, that is, it must satisfy the following {\it detailed balance conditions}:
$$\pi_i p(i,j)=\pi_jp(j,i) \qquad \forall i,j \in \Omega,$$
where $\pi$ is a non-trivial non-negative function over $\Omega$. If $P$ admits a unique stationary distribution $\nu$, then ${1 \over \sum\limits_{i \in \Omega} \pi_i}\pi=\nu$.

It can be shown that the reversible $P$ is a self-adjoint operator in $\ell^2(\pi)$, the space generated by the following inner product induced by $\pi$
 $$\ip{f}{g}_{\pi}=\sum_{i \in \Omega} f(i)g(i)\pi_i$$
 If $P$ is a tridiagonal operator (i.e.\ a nearest-neighbor random walk) on  $\Omega=\{0,1,2,\dots\}$, then it must have a simple spectrum, and is diagonalizable via orthogonal polynomials as it was studied in the 50's by Karlin and McGregor, see \cite{SKJM1959}, \cite{GS1975}, and \cite{MI2005}. There, the extended eigenfunctions $Q_j(\lambda)$ satisfying $Q_0 \equiv 1$ and $$P\begin{pmatrix}
  Q_0(\lambda)\\
  Q_1(\lambda)\\
  Q_2(\lambda)\\
  \vdots
\end{pmatrix} =
\lambda\begin{pmatrix}
  Q_0(\lambda)\\
  Q_1(\lambda)\\
  Q_2(\lambda)\\
  \vdots
\end{pmatrix}
$$
 are orthogonal polynomials with respect to a probability measure
 $\psi$.  If we let $p_t(i,j)$ denote the entries of the operator
 $P^t$ that represent $t$ step transition probabilities from state $i$ to state $j$ then
 $$p_t(i,j)=\pi_j \int_{-1}^1 \lambda^t Q_i(\lambda) Q_j(\lambda) d\psi(\lambda)~~~\forall i,j \in \Omega, $$
 where $\pi_j$ with $\pi_0=1$ is the reversibility measure of $P$.

 We will use the following distance to measure the deviation from the stationary distribution on a scale from zero to one. 
 \begin{mydef}
If $\mu$ and $\nu$ are two probability distributions over a sample space $\Omega$, then the {\it total variation distance} is 
$$\| \nu - \mu \|_{TV} = {1 \over 2} \sum_{x \in \Omega} |\nu(x)-\mu(x)|=\sup_{A \subset \Omega} |\nu(A)-\mu(A)|$$
\end{mydef}
 \noindent
 Let $\rho=\sum_{k=0}^{\infty} \pi_k$. Observe that $\rho < \infty$ if and only if the random walk $P$ is positive recurrent. Recall that $\nu={1 \over \rho}\pi$ is the stationary probability distribution. If in addition to being positive recurrent, the aperiodic nearest neighbor Markov chain originates at site $j$, then
 the total variation distance between the distribution $\mu_t=\mu_0P^t$ and $\nu$ is given by
 \begin{equation} \label{eqTV}
 \left\|\nu - \mu_t \right\|_{TV}  =  {1 \over 2} \sum_{n \in \Omega} \pi_n \left|\int_{(-1,1)} \lambda^t Q_j(\lambda) Q_n(\lambda) d\psi(\lambda)\right|,
 \end{equation} 
 as measure $\psi$ contains a point mass of weight ${1 \over \rho}$ at $1$. 
 See \cite{YK2009}.
 
 The rates of convergence are quantified via mixing times, which for an infinite state space with a unique stationary distribution are defined as follows. Here the notion of a mixing time depends on the state of origination $j$ of the Markov chain. See \cite{YK2010}.
\begin{mydef}
 Suppose $P$ is a Markov chain with a stationary probability distribution $\nu$ that commences at $X_0=j$. Given an $\epsilon >0$, the mixing time $t_{mix}(\epsilon)$ is defined as
 $$t_{mix}(\epsilon)=\min\left\{t~:~\|\nu-\mu_t\|_{TV} \leq \epsilon \right\}$$
\end{mydef}
\vskip 0.2 in
\noindent
 In the case of a nearest-neighbor process  on $\Omega=\{0,1,2,\dots\}$ commencing at $j$,  the corresponding mixing time has the following simple expression in orthogonal polynomials
$$ t_{mix}(\epsilon)=\min\left\{t~:~\sum_{n} \pi_n \left|\int_{(-1,1)} \lambda^t Q_j(\lambda) Q_n(\lambda) d\psi(\lambda)\right| \leq 2 \epsilon \right\},$$

Investigations into the use of  orthogonal polynomial techniques (see \cite{SKJM1959}, \cite{GS1975}) in the estimation of mixing times and distance to the stationary distribution has been carried out in \cite{YK2010} for certain classes of random walks.   In this paper we consider the problem from the other direction.  Namely given a large class of orthogonal polynomials we outline how to find the corresponding random walk and estimate the rate for the distance to the stationary distribution.  

More specifically beginning with the Jacobi polynomials, whose weight
function lies in $(-1,1)$ we use Koornwinder's techniques
\cite{TK1984} to attach a point mass at $1$.  For the class
of Jacobi type polynomials $Q_n$ thus obtained, the three term recurrence
relationship is understood \cite{HKJW1996}.  The tridiagonal operator
corresponding to these polynomials is not a Markov chain, however the
operator can be deformed to become one.  The corresponding changes in
the polynomials are easy to trace.  This gives a four parameter family
of nearest neighbor Markov chains whose distance to the stationary
distribution decays in a non-geometric way.  In principle the
asymptotic analysis presented in this paper can be applied to the
entire four parameter family.  We outline how this proceeds for
Chebyshev-type subfamily consisting of taking $\alpha=\beta=-1/2$ in
the Koornwinder class.

We would like to point out the important results of V.~B.~Uvarov
\cite{VU1969} on transformation of orthogonal polynomial systems by
attaching point masses to the orthogonality measure, predating the
Koornwinder results by fifteen years.  The results of V.~B.~Uvarov can
potentially be used in order to significantly extend the scope of
convergence rate problems covered in this current manuscript.

The paper is organized as follows. In Section \ref{sec:koorn} we discuss constructing positive recurrent Markov chains from the Jacobi family of orthogonal polynomials adjusted by using Koornwinder's techniques  to place a point mass at $x=1$. Next, we derive an asymptotic upper bound on the total variation distance to the stationary distribution in the case of general $\alpha>-1$ and $\beta>-1$ in Section \ref{sec:asympt}. Our main result, Theorem~\ref{main}, is presented in Section \ref{sec:chebyshev}. There, for the case of Chebyshev type polynomials corresponding to $\alpha=\beta=-1/2$, we produce both asymptotic lower and upper bounds for the total variation distance. Finally, in Section~\ref{comparison} we compare our main result to related results obtained by other techniques.

\section{From Orthogonal Polynomials to Random Walks via Koornwinder}\label{sec:koorn}

T.~Koornwinder \cite{TK1984} provides a method for finding the
orthogonal polynomials whose weight distribution is obtained from the
standard Jacobi weight functions
$C_{\alpha,\beta}(1-x)^{\alpha}(1+x)^{\beta}$ by attaching weighted
point masses at $-1$ and $1$. A spectral measure corresponding to a
Markov chain contains a point mass at $-1$ if and only if the Markov
chain is periodic. A spectral measure for an aperiodic Markov chain
contains a point mass at $1$ if and only if it is positive
recurrent. Thus in order to create a class of positive recurrent
aperiodic Markov chains with a Koornwinder type orthogonal polynomial
diagonalization we will only need to attach a point mass at $1$ and no
point mass at $-1$.

Let $N\geq 0$ and let $\alpha$, $\beta>-1$.  For $n=0, 1, 2, \ldots$
define
\begin{equation}\label{eq:Koornwinder}
P_n^{\alpha,\beta,N}(x)=\Big(\frac{(\alpha+\beta+2)_{n-1}}{n!}\Big)A_n\Big[-N
  (1+x)\frac{d}{dx}+B_n\Big]P_n^{\alpha,\beta}(x),
\end{equation}
where 
$$A_n=\frac{(\alpha+1)_n}{(\beta+1)_n},$$
$$B_n=\frac{(\beta+1)_nn!}{(\alpha+1)_n(\alpha+\beta+2)_{n-1}}+\frac{n(n+\alpha+\beta+1)N}{(\alpha+1)},$$
$P_n^{\alpha,\beta}$ is the standard Jacobi polynomials of degree $n$
and order $(\alpha,\beta)$, $(x)_n=x(x+1)\cdots(x+n-1)$.  These
polynomials form a system of orthogonal polynomials with respect to
the probability measure
$d\psi(x)={C_{\alpha,\beta}(1-x)^\alpha(1+x)^\beta dx+N\delta_1(x)
  \over N+1}$, where $C_{\alpha,\beta}={1 \over
  \mathcal{B}(\alpha+1,\beta+1)}$, \quad $\mathcal{B}(\cdot,\cdot)$ is
the beta function, and $\delta_1(x)$ denotes the a unit point mass
measure at $x=1$. See T.~Koornwinder \cite{TK1984}. Direct calculation
shows that $P_n^{\alpha,\beta,N}(1)=\frac{(\alpha+1)_n}{n!}$, and so
we normalize $Q_n(x)=n!P_n^{\alpha,\beta,N}(x)/(\alpha+1)_n$ which is
the orthogonal set of polynomials with respect to $d\psi$ satisfying
$Q_n(1)=1$.

As we have mentioned earlier, the tridiagonal operator $H$ corresponding to the recurrence relation of the orthogonal polynomials may not be a Markov chain operator.  Let $p_i$, $r_i$ and $q_i$ denote the coefficients in the tridiagonal recursion
$$p_i Q_{i+1}(x)+r_iQ_i(x)+q_i Q_{i-1}(x)=xQ_i(x),$$
for $i=0,1,2,\dots$, where we let $Q_{-1} \equiv 0$ as always.

Notice because the polynomials are normalized so that
$Q_i(1)=1$ it follows immediately that $p_i+r_i+q_i=1$.
However some of the coefficients  $p_i$, $r_i$, or $q_i$ may turn out to be negative, in which case the rows of the tridiagonal operator $A$ would add up to one, but will not necessarily consist of all nonnegative entries. 

In the case when all the negative entries are located on the main diagonal, this may be overcome by
considering the operator $\frac{1}{\lambda+1}(H+\lambda I)$.  For $\lambda \geq-\inf\limits_i r_i$ this ensures all entries in the matrix $\frac{1}{\lambda+1}(H+\lambda I)$ are nonnegative and hence can be thought of as transition probabilities.  More generally, if a polynomial $p(\cdot)$ with coefficients adding up to one is found to satisfy $p(H) \geq 0$ coordinatewise, then such $p(H)$ would be a Markov chain.

\section{An Asymptotic Upper Bound for Jacobi type Polynomials}\label{sec:asympt}

In this section we derive asymptotic estimates for the distance to the
stationary distribution when our operator given by
$P_{\lambda}=\frac{1}{\lambda+1}(H+\lambda I)$ is a Markov chain. In
this case the Karlin-McGregor orthogonal polynomials for $P_{\lambda}$
are $Q_j\Big((1+\lambda)x-\lambda\Big)$ and the orthogonality
probability measure is 
${1 \over 1+\lambda}d\psi\Big((1+\lambda)x-\lambda\Big)$ over 
$\Big({\lambda-1 \over \lambda+1}, 1 \Big]$, where the $Q_j$ are the
Jacobi type polynomials introduced by Koornwinder from the previous
section.

Of course the new operator $P_{\lambda}$ is again tridiagonal.  For the $n$-th row of $P_{\lambda}$, let us denote the $(n-1)$-st, $n$-th, and $(n+1)$-st entries by $q_n^\lambda$, $r_n^\lambda$, and $p_n^\lambda$ respectively. Here the entries of $P_{\lambda}$ can be expressed via the entries of $H$ as follows
$$p_n^\lambda=\frac{p_n}{1+\lambda}, \qquad r_n^\lambda=\frac{r_n+\lambda}{1+\lambda}, \quad \text{ and } \quad q_n^\lambda=\frac{q_n}{1+\lambda}$$  
Clearly we still have that
$p_n^\lambda+r_n^\lambda+q_n^\lambda=1$.  

With the probabilities in hand we now compute the corresponding reversibility function $\pi_n^\lambda$ of $P_{\lambda}$ which is equal to the corresponding function of $H$ defined as $\pi_n=\frac{p_0\cdots p_{n-1}}{q_1\cdots q_n}$.  Here $\pi_0^\lambda=1=\pi_0$ and $\pi_n^\lambda=\frac{p_0^\lambda\cdots p_{n-1}^\lambda}{q_1^\lambda\cdots q_n^\lambda}=\frac{p_0\cdots p_{n-1}}{q_1\cdots q_n}=\pi_n$.

Changing variables in \eqref{eqTV} yields
$$\|\nu-\mu_t\|_{TV}=\frac{1}{2}\sum_{n=0}^{\infty}\pi_n\bigg|\int_{(-1,1)}\Big(\frac{x}{1+\lambda}+\frac{\lambda}{1+\lambda}\Big)^tQ_j(x)Q_n(x)\,d\psi(x)\bigg|$$

\begin{lem}\label{lem:bounds}
Consider the case when $p_n>0$ and $q_n>0$ for all $n \geq 0$, and \mbox{$\infty> \lambda \geq-\inf\limits_i r_i$}.
Then, for the Jacobi type polynomials $Q_j$ the distance to the stationary distribution satisfies the following bound
\begin{equation}\label{jacobi_bound}
\norm{\nu-\mu_t}_{TV}\leq\frac{C_{\alpha,\beta,\lambda}\norm{Q_j}_\infty}{(t+1)^{1+\alpha}}\sum_{n=0}^{t+j}\pi_n\norm{Q_n}_\infty+\frac{1}{2}\sum_{n=j+t+1}^{\infty}\pi_n
\end{equation}
for a certain constant $C_{\alpha,\beta,\lambda}$.
\end{lem}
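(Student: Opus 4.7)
\textbf{Proof plan for Lemma \ref{lem:bounds}.}

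The strategy is to split the sum at $n = t+j$, using orthogonality to handle the tail and a Laplace-type estimate on the absolutely continuous part of $\psi$ to handle the head. The key observation driving the tail argument is that $\left(\frac{x+\lambda}{1+\lambda}\right)^t Q_j(x)$ is a polynomial in $x$ of degree $t+j$, and all $Q_n$ have been normalized so that $Q_n(1)=1$. These two facts fit together very cleanly with the structure of $\psi$.

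For the tail $n \geq t+j+1$, orthogonality of $Q_n$ with respect to $\psi$ against all polynomials of lower degree gives
$$\int_{[-1,1]}\left(\frac{x+\lambda}{1+\lambda}\right)^t Q_j(x)Q_n(x)\,d\psi(x)=0.$$
Since $d\psi = \tfrac{C_{\alpha,\beta}}{N+1}(1-x)^{\alpha}(1+x)^{\beta}dx + \tfrac{N}{N+1}\delta_1$, the integral over the open interval $(-1,1)$ differs from the integral over $[-1,1]$ by the point mass contribution at $x=1$, which evaluates to $\tfrac{N}{N+1}\cdot 1 \cdot Q_j(1)Q_n(1)=\tfrac{N}{N+1}$. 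Therefore $\left|\int_{(-1,1)}\cdots d\psi\right| = \tfrac{N}{N+1}\leq 1$, and summing yields $\tfrac{1}{2}\sum_{n=j+t+1}^{\infty}\pi_n$, which is exactly the second term in \eqref{jacobi_bound}.

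For the head $n \leq t+j$, since the integral is over $(-1,1)$ only the absolutely continuous part of $\psi$ contributes, and we may bound $|Q_j(x)Q_n(x)|$ by $\norm{Q_j}_\infty\norm{Q_n}_\infty$ pointwise on $[-1,1]$. It then suffices to show that
$$I_t := \int_{-1}^{1}\left|\frac{x+\lambda}{1+\lambda}\right|^t (1-x)^{\alpha}(1+x)^{\beta}\,dx = O\!\left(\frac{1}{(t+1)^{1+\alpha}}\right).$$
After the substitution $u = (1-x)/2$ the integrand becomes $\left|1-\tfrac{2u}{1+\lambda}\right|^t u^{\alpha}(1-u)^{\beta}$ up to constants. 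On the subinterval where this factor is nonnegative (all of $[0,1]$ when $\lambda\geq 1$, and $[0,(1+\lambda)/2]$ otherwise) we use $\left(1-\tfrac{2u}{1+\lambda}\right)^t \leq e^{-2tu/(1+\lambda)}$ and compute a gamma integral in $u^{\alpha}$ to obtain the $(t+1)^{-1-\alpha}$ rate; on the complementary subinterval (empty when $\lambda\geq 1$), $\left|\tfrac{x+\lambda}{1+\lambda}\right|$ is strictly less than one uniformly, giving an exponentially small correction that is absorbed into the constant $C_{\alpha,\beta,\lambda}$. Summing over $n\leq t+j$ produces exactly the first term in \eqref{jacobi_bound}.

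I expect the main technical nuisance to be the asymptotic bound on $I_t$, in particular verifying that the edge behavior near $x=-1$ does not spoil the decay rate governed by the $(1-x)^{\alpha}$ weight near $x=1$; this amounts to keeping track of the parameter $\lambda$ and the sign of $(x+\lambda)/(1+\lambda)$ carefully when $\lambda<1$, and it only affects the numerical value of $C_{\alpha,\beta,\lambda}$, not the rate.
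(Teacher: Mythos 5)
Your proposal is correct and follows essentially the same route as the paper: split the sum at $n=t+j$, use orthogonality of $Q_n$ against the degree-$(t+j)$ polynomial $\big(\tfrac{x+\lambda}{1+\lambda}\big)^t Q_j(x)$ (corrected by the point mass at $1$) for the tail, and bound the head by $\norm{Q_j}_\infty\norm{Q_n}_\infty$ times a Laplace-type estimate of $\int\big|\tfrac{x+\lambda}{1+\lambda}\big|^t(1-x)^\alpha(1+x)^\beta\,dx\asymp(t+1)^{-1-\alpha}$. If anything, your evaluation of the tail integral as $-N/(N+1)$ is more careful than the paper's (which asserts the value $1$, though only a bound of absolute value at most $1$ is needed), and your substitution $u=(1-x)/2$ with a case split on the sign of $1-\tfrac{2u}{1+\lambda}$ is a routine variant of the paper's split at $x=0$ followed by the substitution $s=-\log\tfrac{x+\lambda}{1+\lambda}$.
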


\begin{proof}
For $n>j+t$, it follows from the orthogonality of the polynomials and our normalization $Q_i(1)=1$ that
$$\int_{(-1,1)}\Big(\frac{x}{1+\lambda}+\frac{\lambda}{1+\lambda}\Big)^tQ_j(x)Q_n(x)\,d\psi(x)=1$$
 It is then easy to see that 
$\norm{\nu-\mu}_{TV}\leq I+II+\frac{1}{2}\sum_{n=j+t+1}^{\infty}\pi_n$, where 
$$I=\frac{1}{2}\sum_{n=0}^{j+t}\pi_n\int_{(-1,0)}\abs{\Big(\frac{x+\lambda}{1+\lambda}\Big)^tQ_j(x)Q_n(x)}(1-x)^\alpha(1+x)^\beta\,dx$$
$$\text{ and } \quad II=\frac{1}{2}\sum_{n=0}^{j+t}\pi_n\int_{(0,1)}\Big(\frac{x+\lambda}{1+\lambda}\Big)^t\big|Q_j(x)Q_n(x)\big|(1-x)^\alpha(1+x)^\beta\,dx$$

To estimate $I$ notice that $\abs{\frac{x+\lambda}{1+\lambda}}\leq
\max(\frac{\lambda}{1+\lambda}, \abs{\frac{1-\lambda}{1+\lambda}})<1$ for
$\lambda>0$. Hence \mbox{$I\leq A_j(|t|)e^{-ct}$} for an appropriate polynomial $A_j(\cdot)$ such that 
$$\frac{1}{2}\|Q_j\|_{\infty}\sum_{n=0}^{j+t}\pi_n \|Q_n\|_{\infty}\int_{(-1,0)}(1-x)^\alpha(1+x)^\beta\,dx \leq A_j(|t|),$$
and 
$c=-\log\left\{\max(\frac{\lambda}{1+\lambda}, \abs{\frac{1-\lambda}{1+\lambda}})\right\}$. Such polynomial $A_j$ exists since $\|Q_n\|_{\infty}$ grows polynomially in $n$ and $\pi_n$ is bounded. See formula 22.14.1 in Abramowitz and Stegun \cite{MAIS1972}.

Thus $I$ is clearly bounded by the right hand side of \eqref{jacobi_bound}.

For the second term,
$II\leq \frac{1}{2}\sum_{n=0}^{j+t}\pi_n\norm{Q_nQ_j}_{\infty}\int_0^1\Big(\frac{x+\lambda}{1+\lambda}\Big)^t\big(1-x)^\alpha(1+x)^\beta\,dx$.
There we make the change of variables $s=-\log(\frac{x+\lambda}{1+\lambda})$, and for
simplicity let $x(s)=(1+\lambda)e^{-s}-\lambda$.  Then the
integral reduces to
$$(1+\lambda)^{1+\alpha}\int_0^{\log(\frac{1+\lambda}{\lambda})}e^{-s(t+1)}\big(1-\lambda+(1+\lambda)e^{-s})^\beta\big(1-e^{-s}\big)^\alpha\,ds$$

Using the fact that $(1-e^{-s})^\alpha=s^\alpha\Big(1+O(s)\Big)$ and 
$\big(1-\lambda+(1+\lambda)e^{-s})^\beta=2^\beta+O(s)$,
the above integral becomes
$$(1+\lambda)^{1+\alpha}\int_0^{\log(\frac{1+\lambda}{\lambda})}e^{-s(t+1)}\Big(2^\beta
s^\alpha+O(s^{\alpha+1})\Big)\,ds,$$
where the upper bounds $O(s)$ can be made specific.
Next, applying the standard asymptotic methods of Laplace to this yields the following asymptotics  
$$\int_0^{\log(\frac{1+\lambda}{\lambda})}e^{-s(t+1)}
s^\alpha\,ds ~\asymp~ \frac{\Gamma(\alpha+1)}{(t+1)^{1+\alpha}}$$

Thus one can obtain a large enough constant $\widetilde{C}_{\alpha,\beta,\lambda}$ such that
$$II\leq\
\frac{\widetilde{C}_{\alpha,\beta,\lambda}\norm{Q_j}_\infty}{(t+1)^{1+\alpha}}\sum_{n=0}^{t+j}\pi_n\norm{Q_n}_\infty$$  
\end{proof}

In order to derived effective bounds on $\norm{\nu-\mu_t}_{TV}$ it is
necessary to gain a more detailed understanding of $\pi_n$
and $\norm{Q_n}_\infty$.  When $\min(\alpha,\beta)\geq -\frac{1}{2}$, the $\norm{Q_n}_\infty$ can be
estimated using the known maximum for the Jacobi polynomials found in Lemma 4.2.1 on page 85 of \cite{MI2005} together
with Koornwinder's definition of these polynomials.

One way to derive estimates for $\pi_n$ is to use the expression
$\pi_n$ in terms of $p_n$,  $r_n$, and $q_n$.  For Koorwinder's class of
polynomials these expressions are derived for all $\alpha, \beta, M,
N$ in \cite{HKJW1996}.  It can be verified directly that in the case
when $M=0$, then
$p_0=\frac{2(\alpha+1)}{(1+N)(\alpha+\beta+2)}>0$. After taking into
account the normilization $Q_n(1)=1$, and taking into account a
small typo, it can be verified from equations (41)--(45) in \cite{HKJW1996} that $p_n$ and
$q_n$ are positive for $n\geq 1$.  Thus the conditions for Lemma~\ref{lem:bounds} are satisfied for all $\alpha, \beta >-1$. Furthermore, from (18), (19) and (32)  in \cite{HKJW1996}
it can be easily seen that $p_n\to \frac{1}{2}$ and $q_n\to \frac{1}{2}$
as $n\to\infty$, and hence $r_n=1-p_n-q_n\to 0$ as $n\to \infty$.  Thus
for $\lambda$ large enough the operator $P^\lambda$ corresponds to a
Markov chain.

As the expressions for these quantities
laborious to write down, instead we focus our attention on a specific
case in which our calculations are easy to follow.  Specifically
we focus on the Chebyshev polynomials.

\section{Chebyshev Polynomials: Upper and Lower Bounds}\label{sec:chebyshev}
By applying Koorwinder's results to the Chebyshev polynomials of the first kind which correspond to the case of $\alpha=\beta=-{1 \over 2}$, we arrive at a family of orthogonal polynomials with respect to the measure
$\frac{1}{1+N}\Big(\frac{1}{\pi\sqrt{1-x^2}}dx+N\delta_1(x)\Big)$.
Using \eqref{eq:Koornwinder} we find that here,
$$Q_n(x):=-N(x+1)U_{n-1}(x)+(1+2nN)T_n(x),$$ 
where $T_n$ and $U_n$ denote the Chebyshev polynomials of the first and second kind
respectively.  Notice that $U_n(1)=n+1$ and $T_n(1)=1$, which immediately to verify that $Q_n(1)=1$.

Once again we consider the operator 
$$H=\begin{pmatrix}r_0 & p_0 & 0 & 0 & 0 &\cdots\\
q_1 & r_1 & p_1 & 0 & 0 & \vdots\\
0 & q_2 & r_2 & p_2 & 0 & \ddots\\
0  & 0 & q_3 & r_3 & \ddots & \ddots\\
\vdots & \cdots & \ddots & \ddots & \ddots & \dots\\
\end{pmatrix},$$
on $\ell^2(\pi)$, so that vector $(Q_0(x), Q_1(x), Q_2(x),
\ldots)^T$ is an eigenvector with eigenvalue~$x$.

Specifically the numbers $p_n$,  $r_n$, and $q_n$ satisfy
$p_0P_1(x)+r_0P_0(x)=x$ for $n=0$, and 
\begin{equation}\label{eigvaleq}
  p_nQ_{n+1}(x)+r_nQ_n(x)+q_nQ_{n-1}(x)=xQ_n(x)\qquad \text{for $n\geq 1$.}
\end{equation}

Keisel and Wimp \cite{HKJW1996} give expressions
for $p_n$,  $r_n$ and $q_n$ for $n\geq 0$.  To find the expressions
directly in this case one could use \eqref{eigvaleq} to derive three
linearly independent equations, and solve for $p_n$,  $r_n$, and $q_n$.

For the case $n=0$ the equation immediately gives us that
$p_0=\frac{1}{N+1}$ and $r_0=\frac{N}{N+1}$.  Evaluating at convenient
choices of $x$, such as $-1, 0, 1$, do not yield linearly
independent equations for all $n$.  One solution to this is to
evaluate at $x=1,-1$ and differentiate \eqref{eigvaleq} and then
evaluate at $x=0$.  This gives three linearly independent equations
and a direct calculation then shows that
\begin{equation}
\begin{gathered}\label{eq:linearprobs}
p_n=\frac{1}{2}\cdot\frac{1+(2n-1)N}{1+(2n+1)N}, \quad
q_n=\frac{1}{2}\cdot\frac{1+(2n+1)N}{1+(2n-1)N}, \quad \text{and}\\
r_n=\frac{-2N^2}{(1+(2n-1)N)(1+(2n+1)N)}
\end{gathered}
\end{equation}
As $r_n\leq 0$ the operator $H$ fails to correspond to a Markov chain.
However this is the case we addressed at the end of Section~\ref{sec:koorn} of the current paper.
Thus consider $P_\lambda=\frac{1}{1+\lambda}(H+\lambda I)$.  Now, since $|r_n|$ is
a decreasing sequence for $n\geq 1$. So provided that $\lambda\geq |r_1|=\frac{2N^2}{(1+N)(1+3N)}$, we then have 
$p_n^\lambda, r_n^\lambda, q_n^\lambda\geq 0$.  Thus we can consider these coefficients  $p_n^\lambda$,  $r_n^\lambda$, and $q_n^\lambda$ as the transition probabilities in a nearest neighbor random walk.

Recall that $\pi_n^\lambda=\pi_n=\frac{p_0\cdots p_{n-1}}{q_1\cdots
  q_n}$.  Thus for $P_\lambda$ we can directly calculate $\pi_n$ from
\eqref{eq:linearprobs}.  We have that $p_0\cdots
p_{n-1}=\frac{1}{2^{n-1}}\frac{N}{1+(2n-1)N}$ and  similarly $q_1\cdots q_n=\frac{1}{2^n}\frac{1+(2n+1)N}{1+N}$.  Thus $\pi_n=\frac{2(1+N)N}{(1+(2n-1)N)(1+(2n+1)N)}$.

\begin{thm}\label{main} Given $N>0$ and $\lambda\geq \frac{2N^2}{(1+N)(1+3N)}$.
  Consider the case of the Chebyshev-type random walks over $\Omega=\{0,1,2,\dots \}$ with probability operator
  $$P_\lambda=\begin{pmatrix}r_0^\lambda & p_0^\lambda & 0 & 0 & 0 &\cdots\\
q_1^\lambda & r_1^\lambda & p_1^\lambda & 0 & 0 & \vdots\\
0 & q_2^\lambda & r_2^\lambda & p_2^\lambda & 0 & \ddots\\
0  & 0 & q_3^\lambda & r_3^\lambda & \ddots & \ddots\\
\vdots & \cdots & \ddots & \ddots & \ddots & \dots\\
\end{pmatrix},$$
  where $p_n^\lambda=\frac{1}{2(1+\lambda)}\cdot\frac{1+(2n-1)N}{1+(2n+1)N}$, \quad $q_n^\lambda=\frac{1}{2(1+\lambda)}\cdot\frac{1+(2n+1)N}{1+(2n-1)N}$
  and \quad $r_n^\lambda=1-p_n^\lambda-q_n^\lambda$ for $n \geq 1$, with $p_0^\lambda=\frac{1}{(1+\lambda)(N+1)}=1-r_0^\lambda$.  
  
  Then for the random walk originating at some site $j \in \Omega$, there are positive constants $c$ and $C$ that depend on $j$, $N$ and $\lambda$ such that 
  $$\frac{c}{\sqrt{t}}\leq \norm{\nu-\mu_t}_{TV}\leq
  C\frac{\log t}{\sqrt{t}}$$
  for $t$ sufficiently large.
  
\end{thm}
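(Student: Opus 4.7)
My plan is to prove the upper and lower bounds by separate, essentially orthogonal arguments. The upper bound is a direct specialization of Lemma~\ref{lem:bounds} with $\alpha = \beta = -1/2$, and the lower bound is obtained by retaining only the $n = 0$ summand in the total variation expression and running a Laplace-type asymptotic on the resulting one-dimensional integral.

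For the upper bound, I first control the two quantities appearing on the right-hand side of \eqref{jacobi_bound}. The Koornwinder representation $Q_n(x) = -N(x+1)U_{n-1}(x) + (1+2nN)T_n(x)$, together with the standard bounds $|T_n|\leq 1$ and $|U_{n-1}|\leq n$ on $[-1,1]$, immediately yields $\|Q_n\|_\infty = O(n)$. The closed form for $\pi_n$ derived just before the theorem gives $\pi_n \asymp 1/n^2$. Hence $\pi_n\|Q_n\|_\infty = O(1/n)$ and $\sum_{n=0}^{t+j}\pi_n\|Q_n\|_\infty = O(\log t)$. Since $1+\alpha = 1/2$, the first summand in \eqref{jacobi_bound} contributes $O(\log t/\sqrt{t})$, while the tail $\sum_{n>j+t}\pi_n = O(1/t)$ is of smaller order and may be absorbed.

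For the lower bound I use $\pi_0 = 1$, $Q_0 \equiv 1$, and the continuous part $\tfrac{dx}{(1+N)\pi\sqrt{1-x^2}}$ of $d\psi$ to estimate
\[
\|\nu-\mu_t\|_{TV} \geq \frac{1}{2}\left|\int_{(-1,1)}\left(\frac{x+\lambda}{1+\lambda}\right)^t Q_j(x)\,\frac{dx}{(1+N)\pi\sqrt{1-x^2}}\right|.
\]
I would split the integration interval at $x = 1-\epsilon$ for some small $\epsilon > 0$. On $[-1,1-\epsilon]$ the quantity $|(x+\lambda)/(1+\lambda)|$ is strictly less than $1$, so this contribution is exponentially small in $t$. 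On $[1-\epsilon,1]$ I take $\epsilon$ small enough that $Q_j(x)\geq 1/2$ (using continuity of $Q_j$ and the normalization $Q_j(1)=1$). The substitution $s = -\log\bigl((x+\lambda)/(1+\lambda)\bigr)$, as in Lemma~\ref{lem:bounds}, together with $1-x^2 = 2(1+\lambda)s\bigl(1+O(s)\bigr)$, transforms the integral into a positive multiple of
\[
\int_0^{\epsilon'} e^{-(t+1)s}\,s^{-1/2}\bigl(1+O(s)\bigr)\,ds,
\]
and standard Laplace asymptotics give this integral $\asymp 1/\sqrt{t}$. This produces the desired lower bound of order $1/\sqrt{t}$.

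The main delicate point is guaranteeing that the single-term $n=0$ contribution does not collapse due to a sign cancellation in $Q_j(x)$ on the region where $(x+\lambda)/(1+\lambda)$ is close to $1$; this is resolved entirely by the normalization $Q_j(1)=1$ together with continuity. Everything else is routine once the explicit formulas for $p_n,q_n,\pi_n$ and the Koornwinder representation of $Q_n$ are in hand, all of which have already been recorded earlier in the section. A cosmetic asymmetry remains—the upper bound carries a logarithmic factor that the one-term lower bound does not see—but Theorem~\ref{main} only asks for the two bounds as stated, so no effort to close that gap is needed.
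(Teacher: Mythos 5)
Your proposal is correct and follows essentially the same route as the paper: the upper bound specializes Lemma~\ref{lem:bounds} with $\alpha=\beta=-\tfrac12$ using $\|Q_n\|_\infty=O(n)$ and $\pi_n\asymp n^{-2}$, and the lower bound keeps only the $n=0$ term and applies the Laplace asymptotic from the lemma's proof. Your explicit treatment of the possible sign cancellation of $Q_j$ near $x=1$ (via $Q_j(1)=1$ and continuity, after discarding the exponentially small contribution away from $x=1$) is a point the paper leaves implicit, and is a worthwhile addition.
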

\begin{proof}
  For the upper bound we simply need to estimate the sums appearing in Lemma~\ref{lem:bounds}.  Since $\pi_n=O\big(\frac{1}{(n+1)^2}\big)$, it is easy to see that the second sum $\sum_{n=j+t+1}^{\infty}\pi_n$ is bounded by $C_N/(t+j+1)$.  
  The main term turns out to be the first sum.

  In the case of the Chebyshev type polynomials we have the bound
  \mbox{$\norm{Q_n}_\infty\leq 4Nn+1$}.  Thus the first sum in Lemma~\ref{lem:bounds} is bounded by 
  $\hat C_{\alpha,\beta,\lambda,N}\frac{j\log(t+j+2)}{\sqrt t}$ for an appropriate constant  $\hat C_{\alpha,\beta,\lambda,N}$.
 And so, for an appropriate $C$ and large $t$,
  $$\norm{\nu-\mu_n}_{TV}\leq C\frac{\log t}{\sqrt{t}}$$

  On the other hand, recalling that $Q_0(x)=\pi_0=1$, we have that:
  $$\norm{\nu-\mu_n}_{TV}\geq \abs{\int_{(-1,1)}\Big(\frac{x+\lambda}{1+\lambda}\Big)^tQ_j(x)(1+x^\beta)(1-x)^\alpha\,dx}$$
 However we have already shown that for large enough $t$, the above right-hand side is asymptotic to
  $\frac{\tilde C}{\sqrt{1+t}}$.
\end{proof}

We finish with some concluding remarks.  At first the bound
\mbox{$\norm{Q_n}_\infty\leq 4Nn+1$} may appear somewhat imprecise since near $x=1$, we have that $Q_n(1)=1$.  
It is tempting to suggest that the correct asymptotic for the total
variation norm is $C/\sqrt{t}$.  However on closer examination in the
neighborhood of $x=1$, $Q_n'(x)\approx n^3$.  This $n^3$ causes the
errors to be at least of the order of  the main term.  Overall
it seems unlikely to the authors that $C/\sqrt{t}$ is the correct asymptotic for the Chebyshev-type polynomials.

\section{Comparison to other methods}\label{comparison}

An ergodic Markov chain $P=\Big(p(i,j)\Big)_{i,j \in \Omega}$ with stationary distribution $\nu$ is said to be {\it geometrically ergodic} if and only if there exists $0<R<1$ and a function $M: \Omega \rightarrow \mathbb{R}_+$ such that for each initial state $i \in\Omega$, the total variation distance 
decreases exponentially as follows
$$\|p_t(i,\cdot)-\nu(\cdot)\|_{TV}={1 \over 2} \sum\limits_{j \in \Omega} |p_t(i,j)-\nu(j)| \leq M(i) R^t$$
In other words, an ergodic Markov chain is geometric when the rate of convergence to stationary distribution is exponential. See \cite{SMRT2009} and references therein.

If the state space $\Omega$ is finite, $|\Omega|=d <\infty$, and Markov chain is irreducible and aperiodic, then $P$ will have eigenvalues that can be ordered as follows $$\lambda_1=1 >|\lambda_2| \geq \dots \geq |\lambda_d|$$
In which case, the Perron-Frobenious Theorem will imply geometric ergodicity with 
$$\|p_t(i,\cdot)-\nu(\cdot)\|_{TV}=O(t^{m_2-1} |\lambda_2|^t),$$
where $m_2$ is the algebraic multiplicity of $\lambda_2$. Here the existence of a positive {\it spectral gap}, $1-|\lambda_2|>0$, implies geometric ergodicity with the exponent $-\log  |\lambda_2| \approx 1- |\lambda_2|$ whenever the spectral gap is small enough.

When dealing with Markov chains over general countably infinite state space $\Omega$, the existence of a positive spectral gap of the operator $P$ is essentially equivalent  to the chain being geometrically ergodic. For instance, the orthogonal polynomial approach in \cite{YK2010} resulted in establishing the geometric rate $R=\max\left\{r+2\sqrt{pq},~{q \over q+r}\right\}$ for the Markov chain
$$P=\left(\begin{array}{ccccc}0 & 1 & 0 & 0 & \dots \\q & r & p & 0 & \dots \\0 & q & r & p & \ddots \\0 & 0 & q & r & \ddots \\\vdots & \vdots & \ddots & \ddots & \ddots\end{array}\right) \qquad q>p,~~~r>0$$ 
over $\Omega=\mathbb{Z}_+$, together with establishing the value of the spectral gap, $1-r>0$.

As for the Markov chain $P_\lambda$ considered in Theorem~\ref{main} of this paper, its spectral measure ${1 \over 1+\lambda}d\psi\Big((1+\lambda)x-\lambda\Big)$ over $\Big({\lambda-1 \over \lambda+1}, 1 \Big]$ admits {\it no} spectral gap between the point mass at $1$ and the rest of the spectrum implying sub-geometric ergodicity. The sub-exponential rate in total variation norm is then estimated to be of polynomial order between $\frac{1}{\sqrt{t}}$ and $\frac{\log t}{\sqrt{t}}$.

In the field of probability and stochastic processes, there is a great
interest in finding methods for analyzing Markov chains over general
state space that have polynomial rates of convergence to stationary
distribution. In Menshikov and Popov \cite{MMSP1995} a one dimensional
version of Lamperti's problem is considered. There, a class of ergodic
Markov chains on countably infinite state space with sub-exponential
convergence to the stationary probabilities is studied via
probabilistic techniques. One of their results relates to our main
result, Theorem~\ref{main}.  Namely, Theorem 3.1 of \cite{MMSP1995}
when applied to our case, implies for any $\varepsilon>0$ the
existence of positive real constants $C_1$ and $C_2$ such that
$$C_1t^{-{1 \over 2}-\varepsilon} \leq |\nu(0)-\mu_t(0)| \leq C_2t^{-{1 \over 2}+\varepsilon}$$
Thus for the Markov chain considered in Theorem~\ref{main}, the
orthogonal polynomials approach provides a closed form expression for
the difference $\nu-\mu_t$, and a significantly sharper estimate on
convergence of $\mu_t$ to the stationary distribution $\nu$, for both
the single state distance $|\nu(0)-\mu_t(0)|$ and a much stronger
total variation norm, $\|\nu-\mu_t\|_{TV}$.

\section{Acknowledgments}
We would like to thank Yuan Xu of University of Oregon for his helpful comments that
initiated this work. We would also like to thank Michael Anshelevich of Texas A \& M for the feedback he provided during the conference on orthogonal polynomials in probability theory in July of 2010. We would like thank Andrew R. Wade of the University of Strathclyde for his helpful comments on the preprint of this paper. Finally, we would like to thank the anonymous  referee for the many helpful corrections and suggestions.

\begin{bibdiv}
  \begin{biblist}
   \bib{MAIS1972}{book}{
        editor={M. Abramowitz},
        editor={I. A. Stegun},
        title={Handbook of Mathematical Functions with Formulas, Graphs, and Mathematical Tables},
        edition={Ninth}, 
        publisher={Dover Publications},  
        year={1972}
      }  
   \bib{MI2005}{book}{
        author={M. Ismail},
        title={Classical and quantum orthogonal polynomials in one variable},
        series={Encyclopedia of Mathematics and its Applications}, 
        number={98},
        publisher={Cambridge University Press},  
        year={2005}
      } 
   \bib{SKJM1959}{article}{
      title={Random Walks},
      author={S. Karlin},
      author={J. L. McGregor},
      journal={Illinois Journal of Math.},
      volume={3},
      date={1959},
      number={1},
      pages={417--431}
    }
    \bib{HKJW1996}{article}{
      title={A note on Koornwinder's polynomials with weight function $(1-x)^\alpha(1+x)^\beta+M\delta(x+1)+N\delta(x-1)$},
      author={Kiesel, K.},
      author={Wimp, J.},
      journal={Numerical Algorithms},
      volume={11},
      date={1996},
      pages={229--241}
    }
    \bib{TK1984}{article}{
      title={Orthogonal polynomials with weight function $(1-x)^{\alpha }(1+x)^{\beta }+M\delta (x+1)+N\delta (x-1)$},
      author={Koornwinder, T.}
      journal={Canad. Math. Bull.},
      volume={27},
      date={1984},
      number={2},
      pages={205--214}
    }
    \bib{YK2009}{article}{
      title={Orthogonality and probability: beyond nearest neighbor transitions},
      author={Kovchegov, Y.},
      journal={Electron. Commun. Probab.},
      volume={14},
      year={2009},
      pages={90--103}
    }    
    \bib{YK2010}{article}{
      title={Orthogonality and probability: mixing times},
      author={Kovchegov, Y.},
      journal={Electron. Commun. Probab.},
      volume={15},
      year={2010},
      pages={59--67}
    }
    \bib{SMRT2009}{book}{
        author={S. Meyn},
        author={R. L. Tweedie},
        title={Markov Chains and Stochastic Stability},
        edition={Second}, 
        publisher={Cambridge University Press},  
        year={2009}
      }  
    \bib{MMSP1995}{article}{
      title={Exact Power Estimates For Countable Markov Chains},
      author={Menshikov, M.~V.},
      author={Popov, S.~Yu.},
      journal={Markov Processes Relat. Fields},
      volume={1},
      date={1995},
      pages={57--78}
    }
      \bib{GS1975}{book}{
        author={G. Szeg\"{o}},
        title={Orthogonal Polynomials},
        edition={Fourth}, 
        publisher={AMS Colloquium Publications},  
        volume={23},
        year={1975}
      }  
    \bib{VU1969}{article}{
      title={Relation between systems of polynomials orthogonal with respect to various distribution functions},
      author={Uvarov, V.~B.},
      journal={Vychisl. Mat. i Mat. Fiz  (USSR)},
      volume={9},
      number={6},
      year={1969},
      pages={1253--1262}
    }
\end{biblist}
\end{bibdiv}
\end{document}